\newcommand{\Cay}{\mathrm{Cay}}
\theoremstyle{plain}
\newtheorem{theorem}{Theorem}[section]
\newtheorem{lemma}[theorem]{Lemma}
\newtheorem{proposition}[theorem]{Proposition}
\theoremstyle{definition}
\newtheorem{conjecture}[theorem]{Conjecture}
\newtheorem{question}[theorem]{Question}
\newtheorem{remark}[theorem]{Remark}
\title[Coloring minimal Cayley graphs]{Coloring minimal Cayley graphs}
\author[I. Garc\'{i}a-Marco]{Ignacio Garc\'{i}a-Marco}
\address{Facultad de Ciencias and Instituto de Matematicas y Aplicaciones (IMAULL), \\ Universidad de La Laguna, Spain}
\author[K. Knauer]{Kolja Knauer$^*$}
\address{Aix Marseille Univ, Universit\'e de Toulon, CNRS, LIS, Marseille, France\\Departament de Matem\`atiques i Inform\`atica,
Universitat de Barcelona, Spain}
\keywords{Chromatic number, minimal Cayley graph, nilpotent group \\ \ \ \ $ ^*$ Corresponding author}
\subjclass[2010]{05C25, 05C15}
\begin{document}

\begin{abstract}
In 1978 Babai raised the question whether all minimal Cayley graphs have bounded chromatic number; in 1994 he conjectured a negative answer. In this paper we show that any minimal Cayley graph of a (finitely generated) generalized dihedral or nilpotent group has chromatic number at most 3, while 4 colors are sometimes necessary for soluble groups. On the other hand we address a related question proposed by Babai in 1978 by constructing graphs of unbounded chromatic number that admit a proper edge coloring such that each cycle has some color at least twice. The latter can be viewed as a step towards confirming Babai's 1994 conjecture -- a problem that remains open.
\end{abstract}

\maketitle

\section{Introduction}
Given a group $\Gamma$ and a \emph{connection set} $C\subseteq\Gamma$ the (undirected, right) \emph{Cayley graph} $\Cay(\Gamma, C)$ has vertex set $\Gamma$ and $a,b\in\Gamma$ form an edge if $a^{-1}b\in C$. Note that we will consider Cayley graphs as undirected even if the connection set is not inverse-closed. A Cayley graph is \emph{minimal} (a.k.a irreducible) if $C$ is an inclusion-minimal generating set of $\Gamma$ and in this paper we only consider the case where $C$ is finite. Minimal Cayley graphs appear naturally: a famous and open problem often attributed to Lov\'asz is (equivalent to) whether minimal Cayley graphs are Hamiltonian, see~\cite[Section 4]{PR09}. Other areas in which minimal Cayley graphs naturally occur are the genus of a group, see the book~\cite{Whi01} or concerning sensitivity~\cite[Questions 7.7 and 8.2]{GMK22}. 
Different aspects of these graphs (sometimes of particular groups) and their distinguishing features compared to general Cayley graphs have been considered in~\cite{MS20,SS09,LZ01,HI96,God81,FM16}. The present work is motivated by a question that has been brought up by Babai~\cite{B78,B95}:

\begin{question}\label{question}
 Does there exist a finite constant $c$ such that every minimal Cayley graph has chromatic number at most $c$?
\end{question}

Babai conjectured in 1994 a negative answer even in the case the group is finite. 
Note that assuming minimality here is essential. Indeed, every graph is an induced subgraph of some Cayley graph~\cite{BS85,GI87,Bab78}, this is far from being the case for minimal Cayley graphs, which are known to be sparse~\cite{B78,Spe83}. However, there also exist (non-minimal) sparse Cayley graphs (of arbitrary girth) that are expanders and hence have unbounded chromatic number~\cite{Lub-88}. The chromatic number of  random Cayley graphs has been studied by Alon~\cite{Alo13} and Green~\cite{Gre17} and some concrete Cayley graphs have been considered recently~\cite{CK23,Dav23}. However, none of these are minimal.
Concerning the chromatic number of minimal Cayley graphs, in~\cite[Section 3.4]{B95} Babai mentions the following:

\begin{conjecture}\label{conj:babai_new} For every $\varepsilon>0$ there exists a minimal Cayley graph $G=(V,E)$ such that $\alpha(G) \leq \varepsilon|V|$  where $\alpha(G)$
denotes the size of the largest independent set of $G$. 
\end{conjecture}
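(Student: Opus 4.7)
Since the conjecture is open, what follows is a strategy rather than a proof sketch. The goal is to produce, for each $\varepsilon>0$, a finitely generated group $\Gamma$ and an inclusion-minimal generating set $C \subseteq \Gamma$ with $\alpha(\Cay(\Gamma,C)) \leq \varepsilon|\Gamma|$. I would split the attack into three stages.

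\textbf{Stage 1 (combinatorial reduction).} I would first isolate the combinatorial content of minimality. A minimal Cayley graph comes equipped with the canonical proper edge coloring in which each generator $c\in C$ is a color class, and removing the edges of any single color must destroy the ``$c$-step'' of every walk from $1$ to $c$. Equivalently, no cycle can visit each of its colors exactly once: every cycle repeats some color. This cycle-repetition property is precisely the abstract notion the paper studies, and the excerpt already announces that there exist graphs satisfying it with unbounded chromatic number. This reduces Conjecture~\ref{conj:babai_new} to a \emph{realization} problem: can one realize graphs with small $\alpha/|V|$, satisfying the cycle condition, as Cayley graphs of genuine groups?

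\textbf{Stage 2 (algebraic realization).} The groups covered by the paper's positive results (nilpotent, generalized dihedral, and -- via the companion bound -- soluble) are too close to abelian to yield the required independence ratio, so I would target groups with strong non-commutative structure: natural candidates are $\mathrm{PSL}_2(\mathbb{F}_p)$ for growing $p$, Suzuki groups, or more generally finite simple groups of Lie type for which expander-type generating sets are known (Bourgain--Gamburd, Breuillard--Green--Tao). The plan is to start from a small generating set $S$ yielding a known expander, and then to \emph{prune} $S$ to an inclusion-minimal subset $C \subseteq S$ while maintaining as much of the expansion as possible. A parallel approach is to iterate: given a minimal Cayley graph $G$ of $\Gamma$ with independence ratio $\rho$, take a semidirect or wreath extension with a second minimal Cayley graph to produce a minimal generating set for the larger group whose Cayley graph has strictly smaller independence ratio; if this operation shrinks the ratio by a definite factor, iterating drives $\alpha/|V|$ to $0$.

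\textbf{Stage 3 (bounding $\alpha$).} For the candidates in Stage~2, I would bound the independence number spectrally via the Hoffman inequality $\alpha(G) \leq \frac{-\lambda_n}{d - \lambda_n}|V|$ for a $d$-regular graph, so any definite spectral gap in $\Cay(\Gamma,C)$ (even a weak one, after passing to the symmetrization $C \cup C^{-1}$) gives the required bound. Alternatively, one can attempt direct combinatorial arguments using the group action: any independent set $I$ satisfies $I \cdot (C \cup C^{-1}) \cap I = \emptyset$, so $|I|(|C|+|C^{-1}|+1) \leq |\Gamma|$; unfortunately this is far too weak, since minimality forces $|C| = O(\log|\Gamma|)$, which is precisely the obstruction to an elementary counting argument.

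\textbf{Main obstacle.} The heart of the difficulty is the inherent tension between minimality and expansion. Minimality forces the degree to be $O(\log|\Gamma|)$ (and typically much smaller), whereas the constructions of expanding Cayley graphs all use redundant generating sets. Equivalently, minimality is a \emph{global} constraint (no generator is expressible using the others) while small-$\alpha$/large-$\chi$ is a \emph{local} density condition; reconciling the two is exactly what the paper's graph-theoretic construction achieves in the abstract setting. Turning that abstract construction into a genuine group-theoretic one -- presumably by exhibiting the edge-colored graphs of Stage~1 as Schreier or Cayley graphs via some explicit algebraic encoding -- is the step I would expect to dominate the work, and is in my view the reason the conjecture has remained open.
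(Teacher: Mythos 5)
This statement is Babai's 1994 conjecture, which the paper does not prove and explicitly describes as open; there is therefore no proof of it in the paper to compare yours against, and you are right to offer a strategy rather than a proof. Still, two concrete points in your plan are wrong. Your Stage~1 ``reduction'' is not a reduction: the conjecture demands independence ratio $\alpha(G)/|V|\leq\varepsilon$, which is strictly stronger than unbounded chromatic number, and the cycle condition you extract (``every cycle repeats some color'') is the \emph{one popular color} property relevant to \emph{semi}minimal Cayley graphs, not the stronger \emph{no lonely color} property that minimal Cayley graphs actually satisfy -- for the latter, even unbounded chromatic number is open (Question~\ref{conj:babai_old}). Moreover, the one popular color graphs of unbounded chromatic number that the paper constructs in Theorem~\ref{th:countbabai} are Descartes-type graphs whose independence ratio stays bounded away from $0$ (independent sets taken inside the pairwise non-adjacent copies of $G_{k-1}$ cover almost all vertices), and the paper explicitly records as open whether one popular color graphs with $\alpha(G)\leq\varepsilon|V|$ exist at all. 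So even a complete solution of your ``realization problem'' would not yield the conjecture.

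Second, the counting inequality in your Stage~3, $|I|(|C|+|C^{-1}|+1)\leq|\Gamma|$, is false rather than merely ``too weak'': the translates $i\cdot(C\cup C^{-1})$ for distinct $i\in I$ may overlap heavily. Already $\Cay(\Z_{2n},\{1\})$ is an even cycle with $\alpha=n=|V|/2$, violating the inequality for $n\geq 3$. The correct elementary statement points the other way: since minimality forces $|C|\leq\log_2|\Gamma|$ (Lagrange, as in Proposition~\ref{pr:general}), the degree is at most $2\log_2|\Gamma|$ and hence $\alpha(G)\geq|V|/(2\log_2|V|+1)$, which is the precise quantitative form of the obstruction you describe: any construction must let $|\Gamma|\to\infty$ as $\varepsilon\to 0$, and no fixed spectral gap is available after pruning to a minimal set. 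Your Stage~2 candidates and the tension you identify between minimality and expansion are reasonable and consistent with the paper's discussion, but nothing in the proposal constitutes progress on the conjecture itself.
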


Hence, this in particular would imply a negative answer to Question~\ref{question}.
However, previously in~\cite{B78} Babai had proposed an opposing conjecture (which he withdrew in 1994 when he proposed Conjecture~\ref{conj:babai_new}), for which we introduce some notions first. %Edge coloring a minimal Cayley graph with the corresesponding elements of $C$ yields a so-called the following properties: every vertex is incident to at most two edges of each color and on every cycle no color appears exactly once (on color appear at least twice). Such edge-colorings were called \emph{no lonely color} and \emph{one popular color}, respectively by Babai.
A graph $G$ is called \emph{no lonely color} if $G$ admits an edge coloring satisfying:
\begin{enumerate}
\item each vertex is incident with at most two edges of any color,
\item each circuit contains no color exactly once.
\end{enumerate}
In a Cayley graph the edges can be naturally colored by the elements of $C$, which makes it easy to see that any subgraph $G$ of a minimal Cayley graph is \emph{no lonely color}.
A Cayley graph $\Cay(\Gamma,C)$ is \emph{semiminimal} if $C$ is a generating set of $\Gamma$ that can be linearly ordered such that none of its elements is generated by its predecessors. With the same ideas one can show that any subgraph $G$ of a semiminimal Cayley graph is \emph{one popular color}\footnote{Babai called these \emph{no pied circuit}.}, this is, $G$ admits an edge coloring that satisfies (1) and \begin{itemize}
                            
                        \item[(2')] each circuit contains at least one color more than once.
                                                                                                                                                                                                                                                                                                                                                           \end{itemize}

Babai observes that $K_4-e$ and $K_{3,5}$ are not no lonely color and $K_{5,17}$ is not one popular color, hence they cannot be subgraphs of minimal and semiminimal Cayley graphs, respectively. Later using these ideas Spencer~\cite{Spe83} shows that for every $g \geq 3$  there exists a finite graph $G$ of girth $g$ that is not a subgraph of any (semi)minimal Cayley graph.  In~\cite[Conjecture 3.5]{B78} Babai puts forward the following:
\begin{question}\label{conj:babai_old}
Is there a finite constant $c$ such that any one popular color graph (no lonely color graph) has chromatic number at most $c$? 
\end{question}
 Note that a positive answer to Question~\ref{conj:babai_old} would imply that (semi)minimal Cayley graphs have bounded chromatic number, and hence would disprove the more recent Conjecture~\ref{conj:babai_new} and give a positive answer to Question~\ref{question}.

%{\color{red} 
%%\comment{Que los generadores son esos me lo dice GAP porque hizo un isomorfismo de grupos entre $Q_{32}$ y el grupo $\langle c0,c1,c2,c3,c4 \rangle$ }
%$C=$c0 = (1,6)(2,10)(3,13)(4,15)(5,16)(7,19)(8,21)(9,22)(11,24)(12,25)(14,26)(17,28)(18,29)(20,30)(23,31)(27,32), c1 = (1,5,6,16)(2,9,10,22)(3,12,13,25)(4,14,15,26)(7,18,19,29)(8,20,21,30)(11,23,24,31)(17,27,28,32), c2 = (1,10,6,2)(3,28,13,17)(4,30,15,20)(5,9,16,22)(7,11,19,24)(8,14,21,26)(12,27,25,32)(18,31,29,23), c3 = (1,30,6,20)(2,14,10,26)(3,19,13,7)(4,22,15,9)(5,21,16,8)(11,32,24,27)(12,18,25,29)(17,23,28,31), c4 = (1,11,6,24)(2,17,10,28)(3,15,13,4)(5,31,16,23)(7,21,19,8)(9,32,22,27)(12,14,25,26)(18,20,29,30)
%Creo que debe hay algun problema con el dibujo del grafo de Cayley, he probado todas las posibles biyecciones de colores a c0,c1,c2,c3,c4 y he probado todas las orientaciones de aristas y ninguna funciona:
%
%Como hay cuadrados Gris-Verde-Gris-Verde, Gris-Violeta-Gris-Violeta, Gris-Negro-Gris-Negro se deduce que el Gris = c0 (c0 es el unico que dibuja cuadrados con cualquier otro generador, sin usar c0 solo dibujan cuadrados c2 y c3). Luego probando todas las asignaciones de generadores a colores y todas las orientaciones, no se respetan los ciclos: Verde-Violeta-Verde-Violeta-Gris y Verde-Negro-Violeta-Negro.
%}

 This is the starting point for the present work: 
 First, we observe that every group admits a minimal Cayley graph of chromatic number at most $3$ and characterize the bipartite case (Theorem \ref{th:babai}). %This strengthens a result of Babai~\cite{B78}.
 Then we show that minimal Cayley graphs of finitely generated nilpotent groups (Theorem \ref{thm:abelian3}) as well as generalized dihedral groups (Theorem \ref{thm:gendihedral3}) have chromatic number at most $3$. On the other hand, we show a minimal Cayley graph of a soluble group of chromatic number $4$ and a semiminimal Cayley graph of a nilpotent group of chromatic number $7$ (see Proposition~\ref{prop:lb}). %Computational results show that these chromatic numbers are largest possible up to orders $215$ and $64$, respectively. 
 Finally, we negatively answer the stronger statement of Question~\ref{conj:babai_old} by constructing a family of one popular color graphs of unbounded chromatic number (Theorem \ref{th:countbabai}). We further remark, that the clique number of a semiminimal Cayley graph is at most $4$ (Proposition \ref{prop:clique}).
 
                                                                                                                                                                                                                                                \section{Positive results}

Let us begin with the question whether every finite group admits a minimal Cayley graph of bounded chromatic number. This was already answered by Babai~\cite{B78,B95}. Indeed, he proved that every finite group has a 3-colorable minimal Cayley graph.  His proof depends on the fact that every finite simple group can be generated by two elements, one of which is an involution.  This is a known consequence of the Classification of Finite Simple Groups.  For completeness we include a proof following Babai's lines. At the basis of this we have the following~\cite[Lemma 4.2]{B78}, which we will also use later on. For a group $\Gamma$ denote its \emph{minimum chromatic number} and $\chi_{\mathrm{min}}(\Gamma)$ the minimum $\chi(\Cay(\Gamma, C))$ over all generating sets $C$ of $\Gamma$. 

% Indeed, he proved that every finite group has a 3-colorable minimal Cayley graph contingent upon the truth of some facts about simple groups that were conjectured at that time (see \cite[Conjecture 4.3]{B78}). Years later the classification of finite simple groups was settled and those conjectures were verified. For completeness we include a proof following Babai's lines. At the basis of this we have the following~\cite[Lemma 4.2]{B78}, which will also use later on. For a group $\Gamma$ denote its \emph{minimum chromatic number} y $\chi_{\mathrm{min}}(\Gamma)$ the minimum $\chi(\Cay(\Gamma, C))$ over all generating sets $C$ of $\Gamma$. 

%
%Let us begin with the question whether every group admits a minimal Cayley graph of bounded chromatic number. This was already answered by Babai~\cite{B78,B95}. However, using modern results about simple groups we can improve the constant in his result and thus prove another conjecture of his~\cite[Conjecture 4.3]{B78}, exactly in the way he had proposed. At the basis of this we have the following~\cite[Lemma 4.2]{B78}, which will also use later on. For a group $\Gamma$ denote its \emph{minimum chromatic number} by $\chi_{\mathrm{min}}(\Gamma)$ the minimum $\chi(\Cay(\Gamma, C))$ over all generating sets $C$ of $\Gamma$. 

\begin{lemma}[{\cite[Lemma 4.1]{Bab78}}]\label{lm:babai} Let $(\Gamma,\cdot)$ be a group, $N$ a normal subgroup of $\Gamma$ and $C \subseteq \Gamma/N$. Then,
\[ \chi({\rm Cay}(\Gamma, C \cdot N)) \leq \chi({\rm Cay}(\Gamma/N, C)); \]
where $C \cdot N := \{c \cdot n \, \vert \, c \cdot N \in C,\, n \in N\}$. In particular, $\chi_{\mathrm{min}}(\Gamma)\leq \chi_{\mathrm{min}}(\Gamma/N)$.
\end{lemma}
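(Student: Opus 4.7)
The plan is to exhibit a proper coloring of $\Cay(\Gamma, C\cdot N)$ using $\chi(\Cay(\Gamma/N, C))$ colors by pulling back a proper coloring along the canonical quotient map. Let $\pi: \Gamma \to \Gamma/N$ denote the projection $a \mapsto aN$, fix a proper coloring $f: \Gamma/N \to \{1, \ldots, k\}$ of $\Cay(\Gamma/N, C)$ with $k = \chi(\Cay(\Gamma/N, C))$, and set $g := f \circ \pi$ on the vertex set $\Gamma$ of $\Cay(\Gamma, C \cdot N)$.

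The verification reduces to showing that $\pi$ sends edges to edges. If $a, b \in \Gamma$ are adjacent in $\Cay(\Gamma, C \cdot N)$, then $a^{-1}b = c \cdot n$ for some $cN \in C$ and $n \in N$, so $(aN)^{-1}(bN) = (a^{-1}b)N = cN$, which is a non-identity element of $C$ (a Cayley graph connection set does not contain the identity). Hence $aN$ and $bN$ are distinct and joined by an edge in $\Cay(\Gamma/N, C)$, giving $g(a) = f(aN) \neq f(bN) = g(b)$ and the desired inequality.

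For the ``In particular'' clause, I would take a generating set $C$ of $\Gamma/N$ achieving $\chi_{\min}(\Gamma/N)$ and check that $C \cdot N$ generates $\Gamma$. On one hand $\langle C \cdot N\rangle$ contains a system of coset representatives for $C$, so its image under $\pi$ is all of $\Gamma/N$; on the other, for any $cN \in C$ the products $(cn)(cn')^{-1} = c(nn'^{-1})c^{-1}$ as $n, n'$ range over $N$ exhaust $cNc^{-1} = N$, by normality of $N$. Combining these shows $\langle C \cdot N \rangle = \Gamma$, and the first part of the lemma then yields $\chi_{\min}(\Gamma) \leq \chi(\Cay(\Gamma, C \cdot N)) \leq \chi_{\min}(\Gamma/N)$. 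The only place where a nontrivial hypothesis is used is the normality of $N$, which is exactly what makes $C \cdot N$ a generating set of $\Gamma$; everything else is a routine check that the quotient map is a graph homomorphism.
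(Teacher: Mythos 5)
The paper states this lemma without proof, citing Babai, and your argument is exactly the standard one behind it: the quotient map $\Gamma\to\Gamma/N$ is a graph homomorphism from $\Cay(\Gamma,C\cdot N)$ to $\Cay(\Gamma/N,C)$, so a proper coloring pulls back, and normality gives that $C\cdot N$ generates $\Gamma$ whenever $C$ generates $\Gamma/N$. Both parts of your verification are correct and complete.
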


Using this we get:
\begin{theorem}[Babai]\label{th:babai}
 For a finite group $\Gamma$ we have:
 $$\chi_{\mathrm{min}}(\Gamma)=\begin{cases}
                              1 & \text{if } \Gamma \text{ is trivial},\\
                              2 & \text{if } \Gamma \text{ has a subgroup of index }2,\\
                              3 & \text{otherwise.}\\
                             \end{cases}
$$
\end{theorem}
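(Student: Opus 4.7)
The plan is to handle the three cases separately, with the upper bound in the third case comprising the bulk of the argument.

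The trivial case is immediate, as only $C = \emptyset$ generates $\Gamma$. For the lower bounds in the other cases, I first note that any non-trivial Cayley graph has an edge, so $\chi \geq 2$. Moreover, if $\Cay(\Gamma,C)$ is bipartite then the part $B$ of the bipartition containing the identity is closed under products of an even number of elements of $C \cup C^{-1}$, hence forms an index-$2$ subgroup of $\Gamma$. So whenever $\Gamma$ has no such subgroup, every Cayley graph of $\Gamma$ satisfies $\chi \geq 3$.

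The upper bound in Case 2 is a direct application of Lemma~\ref{lm:babai}: if $N$ has index $2$, then $\Gamma/N \cong \Z/2\Z$, whose Cayley graph with connection set $\{1+N\}$ is $K_2$, yielding $\chi_{\mathrm{min}}(\Gamma) \leq \chi_{\mathrm{min}}(\Gamma/N) \leq 2$. Here it is useful to observe that $\chi_{\mathrm{min}}$ over all generating sets coincides with $\chi_{\mathrm{min}}$ over inclusion-minimal generating sets, since restricting to a minimal subset of a generating set produces a spanning subgraph of no larger chromatic number.

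For the upper bound in Case 3, I would induct on $|\Gamma|$. If $\Gamma$ has a non-trivial proper normal subgroup $N$, then $\Gamma/N$ also has no index-$2$ subgroup (such a subgroup would pull back to one in $\Gamma$), so induction together with Lemma~\ref{lm:babai} closes the case. Otherwise $\Gamma$ is simple. In the abelian subcase, $\Gamma$ is cyclic of odd prime order, whose minimal Cayley graph is an odd cycle with $\chi = 3$. In the simple non-abelian subcase, the Classification of Finite Simple Groups provides a generating pair $\{a,b\}$ with $a^2 = e \neq a$; since $\Gamma = \langle a,b\rangle$ is not dihedral, $b$ has order at least $3$, and thus $\Cay(\Gamma,\{a,b\})$ has maximum degree exactly $3$. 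Because $|\Gamma| \geq 60$, the graph is connected and is not $K_4$, so Brooks' theorem delivers the sought $3$-coloring.

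The principal obstacle is the reliance on the Classification of Finite Simple Groups to obtain the involution generator in the simple non-abelian case; the remaining ingredients -- Lemma~\ref{lm:babai}, the inductive lifting, Brooks' theorem, and the handling of small abelian simple groups -- are elementary.
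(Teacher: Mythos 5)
Your proposal is correct and follows essentially the same route as the paper: reduce to simple quotients via Lemma~\ref{lm:babai}, invoke the Classification to get a two-element generating set with an involution for non-abelian simple groups so that the Cayley graph has maximum degree $3$ and Brooks' theorem applies, and characterize the bipartite case by showing the part containing the identity is an index-$2$ subgroup. Your write-up merely makes the paper's reduction to simple groups explicit as an induction and adds the (correct) observation that one may pass to an inclusion-minimal generating subset without increasing the chromatic number.
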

\begin{proof}
 The statement that $\chi_{\mathrm{min}}(\Gamma)=1$ if and only if $\Gamma$ is trivial, is trivial. Now, by Lemma~\ref{lm:babai} $\chi_{\mathrm{min}}(\Gamma)\leq \chi_{\mathrm{min}}(\Gamma/N)$ for any normal subgroup $N$ of $\Gamma$. If $\Gamma$ has a subgroup of index $2$, then it is normal and the quotient is $\mathbb{Z}_2$ and has a bipartite Cayley graph. In general, this argument reduces the question to simple groups. Since each such group $\Gamma'$ is either cyclic or has a generating set $C'$ of size $2$ containing an involution\cite{Kin17}, $\Cay(\Gamma', C')$ has maximum degree $3$ and is different from $K_4$. Hence $\chi(\Cay(\Gamma', C'))\leq 3$. It remains to show that if $\chi_{\mathrm{min}}(\Gamma)=2$, then $\Gamma$ has a subgroup of index $2$. In this case, $\Gamma$ may be partitioned into two independent sets $A,B$, which by vertex transitivity of $\Gamma$ are of equal size. If say $e\in A$, then $A$ consists of all elements of $\Gamma$ that can be expressed as a word of even length in $C$. Hence, $A$ is a subgroup of index $2$.
\end{proof}

We now turn to the \emph{maximum chromatic number} of a group $\Gamma$, i.e.,  $\chi_{\mathrm{max}}(\Gamma)$ the maximum $\chi(\Cay(\Gamma, C))$ over all minimal generating sets $C$ of $\Gamma$. We will show that this is at most $3$ for Dedekind groups, generalized dihedral groups, and nilpotent groups.

If $H<\Gamma$ is a subgroup and $C\subseteq \Gamma$, the \emph{Schreier (coset) graph} $\Cay(\Gamma/H,C)$ has as vertices the left cosets of $H$ and there is an edge between two cosets if they can be represented as  $gH$, $g'H$ and $g^{-1}g'\in C$.

\begin{lemma}\label{lm:Schreier}
 Let $C$ be a minimal generating set of $\Gamma$, then $$\chi(\Cay(\Gamma,C))\leq\max\{\chi(\Cay(\Gamma/\langle C-\{c\} \rangle,\{c\}))\mid c \in C\}.$$ 
\end{lemma}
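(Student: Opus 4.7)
The plan is to construct a proper $k$-coloring of $\Cay(\Gamma,C)$ by summing modulo $k$ the individual proper colorings of the Schreier graphs appearing on the right-hand side. For each $c\in C$, set $H_c:=\langle C\setminus\{c\}\rangle$; minimality of $C$ ensures $c\notin H_c$, while every other $c'\in C\setminus\{c\}$ automatically lies in $H_c$. Put $k:=\max_{c\in C}\chi(\Cay(\Gamma/H_c,\{c\}))$ and, for each $c$, fix a proper coloring $\phi_c\colon\Gamma/H_c\to\{0,1,\ldots,k-1\}$ of the Schreier graph $\Cay(\Gamma/H_c,\{c\})$.

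Then I would define $\Phi\colon\Gamma\to\Z/k\Z$ by
$$\Phi(g):=\sum_{c\in C}\phi_c(gH_c)\pmod{k},$$
and verify that $\Phi$ is a proper coloring of $\Cay(\Gamma,C)$. Given an edge $\{g,gc_0\}$ (with $c_0\in C$), I compute $\Phi(gc_0)-\Phi(g)$ summand by summand: for $c\neq c_0$, one has $c_0\in H_c$, hence $gc_0H_c=gH_c$ and the $c$-th summand cancels; for $c=c_0$, the cosets $gH_{c_0}$ and $gc_0H_{c_0}$ are distinct (since $c_0\notin H_{c_0}$) and are joined by an edge of $\Cay(\Gamma/H_{c_0},\{c_0\})$, so properness of $\phi_{c_0}$ yields $\phi_{c_0}(gH_{c_0})\neq\phi_{c_0}(gc_0H_{c_0})$. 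Both values being in $\{0,\ldots,k-1\}$, their difference is a nonzero integer of absolute value at most $k-1$, hence nonzero modulo $k$. Therefore $\Phi(g)\not\equiv\Phi(gc_0)\pmod{k}$. The undirected convention from the Introduction is handled identically: an edge arising from $g^{-1}g'=c_0^{-1}\in C^{-1}$ still yields an edge in the Schreier graph $\Cay(\Gamma/H_{c_0},\{c_0\})$ since $(gc_0^{-1})^{-1}g=c_0$.

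The main conceptual step, rather than any technical obstacle, is to discover that the $|C|$ Schreier colorings should be combined additively modulo $k$ (rather than tupled via the Cartesian product, which would produce at best a $k^{|C|}$-coloring). The additive collapse succeeds precisely because minimality of $C$ guarantees that along each edge of $\Cay(\Gamma,C)$ exactly one of the $|C|$ summands defining $\Phi$ changes, namely the one indexed by the edge's generator; every other summand is constant on the corresponding $H_c$-coset.
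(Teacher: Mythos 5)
Your proof is correct and takes essentially the same approach as the paper: the paper observes that $x\mapsto(x_{c_1},\ldots,x_{c_k})$ (projection onto the cosets of the subgroups $\langle C-\{c\}\rangle$) is a homomorphism into the Cartesian product of the Schreier graphs and then invokes Sabidussi's theorem that the chromatic number of a Cartesian product equals the maximum over its factors. Your additive mod-$k$ combination of the factor colorings, justified by the observation that exactly one coordinate changes along each edge, is precisely the standard proof of Sabidussi's theorem, so the Cartesian-product route in fact also yields a $k$-coloring rather than the $k^{|C|}$-coloring your closing remark suggests.
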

\begin{proof}
Since $C=\{c_1,\ldots, c_k\}$ is minimal, for each $c\in C$ the graph $\Cay(\Gamma,C-\{c\})$ is disconnected and its connected components   correspond to the vertices of $\Cay(\Gamma/\langle C-\{c\}\rangle,\{c\})$. Moreover, if two vertices $x,y$ of $\Cay(\Gamma,C)$ are connected with an edge corresponding to $c$, then $x,y$ are contained in different components of $\Cay(\Gamma,C-\{c\})$ corresponding to adjacent vertices $x_c,y_c$ of $\Cay(\Gamma/\langle C-\{c\} \rangle,\{c\})$. Hence, if we map every vertex of $x\in \Cay(\Gamma,C)$ to the tuple of vertices $(x_{c_1}, \ldots, x_{c_k})$ we obtain a graph homomorphism into the Cartesian product $\Cay(\Gamma/ \langle C-\{c_1\}\rangle,\{c_1\})\square \cdots \square \Cay(\Gamma/\langle C-\{c_k\}\rangle,\{c_k\})$. Hence, the chromatic number of the latter is an upper bound for $\chi(\Cay(\Gamma,C))$. Finally, by a well-known result of Sabidussi~\cite{Sab57}, the chromatic number of a Cartesian product is the maximum chromatic number of its factors.
\end{proof}

\begin{remark}
 Note that Lemma~\ref{lm:Schreier} alone does not give a useful upper bound for general minimal Cayley graphs. Consider the so-called star graph~(see \cite{ADK94}), which is the bipartite graph $\Cay(S_n,C)$ of the symmetric group of degree $n$ with respect to the set of transpositions involving $1$, i.e., $C=\{(12),(13),\ldots, (1n)\})$. However, $\Cay(S_n/\langle C-\{c\} \rangle,\{c\}))=K_n$ for all $n$ and $c\in C$.
\end{remark}

We can however use the above lemma to bound the maximum chromatic number in some cases. A group is called \emph{Dedekind} if all its subgroups are normal. Clearly, this includes all abelian groups, and by results of Dedekind~\cite{Ded97} and Baer~\cite{Bae33}, actually not much more.

\begin{theorem}\label{thm:abelian3} Every minimal Cayley graph of a Dedekind group is $3$-colorable.
\end{theorem}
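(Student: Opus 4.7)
The plan is to apply Lemma~\ref{lm:Schreier} directly and analyze the resulting Schreier graphs, which for Dedekind groups simplify drastically.

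Let $C$ be a minimal generating set of a Dedekind group $\Gamma$, and fix $c\in C$. The key observation is that since every subgroup of $\Gamma$ is normal, the subgroup $H_c:=\langle C-\{c\}\rangle$ is normal, so $\Gamma/H_c$ is a bona fide group and $\Cay(\Gamma/H_c,\{c\})$ is a Cayley graph of this quotient with respect to the image $\overline{c}$ of $c$. Since $C$ generates $\Gamma$, its image generates $\Gamma/H_c$; but all of $C-\{c\}$ maps to the identity, so $\Gamma/H_c$ is in fact cyclic, generated by $\overline{c}$. Moreover $\overline{c}\neq e$: otherwise $c\in H_c$, contradicting the minimality of $C$.

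Next I would observe that a Cayley graph of a (finite or infinite) cyclic group with respect to a single nontrivial generator is either the two-vertex graph $K_2$ (if $\overline{c}$ has order $2$), a cycle $C_n$ with $n\geq 3$ (if $\overline{c}$ has finite order $n\geq 3$), or the bi-infinite path (if $\overline{c}$ has infinite order). In every case the chromatic number is at most $3$, so
\[
\chi\bigl(\Cay(\Gamma/H_c,\{c\})\bigr)\leq 3\qquad\text{for every }c\in C.
\]
Applying Lemma~\ref{lm:Schreier} then yields $\chi(\Cay(\Gamma,C))\leq 3$, which is the desired bound.

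There is essentially no obstacle here beyond noticing that normality of every subgroup—the defining property of Dedekind groups—is exactly what is needed to make the factors in Lemma~\ref{lm:Schreier} into single-generator Cayley graphs on cyclic groups. For general groups one only gets Schreier coset graphs, which can be arbitrarily complicated (as highlighted by the star-graph remark after Lemma~\ref{lm:Schreier}); the Dedekind hypothesis forces each factor to be a cycle or path, trivializing the coloring step.
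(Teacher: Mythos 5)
Your proposal is correct and follows exactly the paper's argument: normality of $\langle C-\{c\}\rangle$ in a Dedekind group makes each factor in Lemma~\ref{lm:Schreier} a single-generator Cayley graph of a cyclic group, hence a cycle (or $K_2$, or a path), with chromatic number at most $3$. Your write-up just spells out the case analysis on the order of $\overline{c}$ slightly more explicitly than the paper does.
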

\begin{proof}
Consider ${\rm Cay}(\Gamma, C)$ a minimal Cayley graph of a Dedekind group $\Gamma$. Since $\Gamma$ is Dedekind, for all $c\in C$ we have that $\langle C-\{c\} \rangle$ is a normal subgroup of $\Gamma$. Hence, $\Gamma/\langle C-\{c\} \rangle$ is a cyclic group generated by the coset of $c$, i.e., the Schreier coset graph (which is a Cayley graph) is a cycle and $\chi({\rm Cay}(\Gamma/\langle C-\{c\} \rangle,\{c\})) \leq 3$. The statement follows from Lemma~\ref{lm:Schreier}.
\end{proof}

For any abelian group $\Gamma$, the generalized dihedral group of $\Gamma$, written ${\rm Dih}(\Gamma)$, is the semidirect product of $\Gamma$ and $\mathbb Z_2$, with $\mathbb Z_2$ acting on $\Gamma$ by inverting elements, i.e., 
${\rm Dih}(\Gamma)= \Gamma \rtimes_{\phi} \mathbb Z_2$ with $\phi(0)$ the identity, and $\phi(1)$ the inversion. Thus we get:
\begin{itemize}
\item[] $(g_1, 0) * (g_2, t_2) = (g_1 + g_2, t_2)$, and
\item[] $(g_1, 1) * (g_2, t_2) = (g_1 - g_2, 1 + t_2)$,
\end{itemize}
for all $g_1,g_2 \in \Gamma$, and $t_2 \in \mathbb Z_2$.

\begin{theorem}\label{thm:gendihedral3} Every minimal Cayley graph of a generalized dihedral group is $3$-colorable.
\end{theorem}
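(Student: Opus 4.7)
The strategy is to apply Lemma~\ref{lm:Schreier}, reducing the problem to showing that the Schreier graph $\Cay(\Gamma/H, \{c\})$ is $3$-colorable for every $c \in C$, where $H = \langle C \setminus \{c\}\rangle$. Write elements of $\Gamma = {\rm Dih}(A)$ as pairs $(a, \epsilon)$ with $a \in A$ and $\epsilon \in \mathbb{Z}_2$, and partition $C = C_0 \sqcup C_1$ according to the second coordinate. Since $\Gamma/A \cong \mathbb{Z}_2$ and $C$ generates $\Gamma$, the set $C_1$ is nonempty.

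The first step is a structural description of subgroups of $\Gamma$ not contained in $A$: each such subgroup $H$ has the form
\[ H = (H_0 \times \{0\}) \cup ((b + H_0) \times \{1\}) \]
for some subgroup $H_0 \leq A$ and coset $b + H_0 \in A/H_0$. A short calculation gives $(x, 1)H = (x - b, 0)H$ for every $x \in A$, so every left coset of $H$ is represented by some $(x, 0)$; this yields a bijection $\Gamma/H \leftrightarrow A/H_0$ sending $(x,0)H$ to $x+H_0$.

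Now I would case-split on $c$. If $c = (a_0, 1) \in C_1$ and $|C_1| = 1$, then $H \leq A$ and the Schreier graph is immediately seen to be a perfect matching pairing $(x, 0)H$ with $(x + a_0, 1)H$, hence $2$-colorable. Otherwise $H \not\leq A$ and the structural description above applies. Computing $(x, 0) \cdot h \cdot c$ for $h$ ranging over $H$ via the semidirect product rules then shows
\[ \Cay(\Gamma/H, \{c\}) \;\cong\;
\begin{cases}
\Cay(A/H_0,\,\{\pm a_0\}) & \text{if } c = (a_0, 0) \in C_0, \\[2pt]
\Cay(A/H_0,\,\{\pm (a_0 - b)\}) & \text{if } c = (a_0, 1) \in C_1.
\end{cases} \]
In either case the generator ($\pm a_0$, respectively $\pm(a_0-b)$) is nonzero in $A/H_0$ because $c \notin H$ by minimality, so the Schreier graph is either a perfect matching or a disjoint union of cycles, both of which are $3$-colorable. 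Applying Lemma~\ref{lm:Schreier} completes the proof.

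The main subtlety is the double coset computation: one must verify that $HcH$ splits into exactly two $H$-cosets placed symmetrically around $(x,0)H$ in the identification $\Gamma/H \cong A/H_0$. This symmetry comes from the fact that the involutions $(a,1) \in \Gamma$ act on $A$ as inversion by conjugation; the two neighbors of $(x,0)H$ in the Schreier graph correspond to choosing a representative $h \in H$ with second coordinate $0$ or $1$, and the inversion behavior is precisely what produces the $\pm$ in the generator set.
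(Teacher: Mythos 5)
Your proof is correct, but it follows a genuinely different route from the paper's. You push the Schreier-graph reduction of Lemma~\ref{lm:Schreier} all the way through: after classifying the subgroups $H=\langle C\setminus\{c\}\rangle$ of ${\rm Dih}(A)$ not contained in $A\times\{0\}$ as $(H_0\times\{0\})\cup((b+H_0)\times\{1\})$ and identifying $\Gamma/H$ with $A/H_0$, you compute that each Schreier graph $\Cay(\Gamma/H,\{c\})$ is $\Cay(A/H_0,\{\pm a_0\})$ or $\Cay(A/H_0,\{\pm(a_0-b)\})$ (or a perfect matching when $|C_1|=1$), with the generator nonzero by minimality -- I checked these coset and double-coset computations against the semidirect product rules and they are right. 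So every Schreier graph is a disjoint union of cycles and edges, and Lemma~\ref{lm:Schreier} finishes the job. The paper instead splits $C$ into $C_0=C\cap(A\times\{0\})$ and $C_1$, sets $H=(A\times\{0\})\cap\langle C_1\rangle$, invokes Theorem~\ref{thm:abelian3} to $3$-color the abelian quotient $\Cay((A\times\{0\})/H,C_0)$, lifts that coloring to $A\times\{0\}$, and extends it to the reflection copy by translating by a fixed $(y,1)\in C_1$ and shifting colors by $+1\bmod 3$. Your argument is more self-contained (it does not even need Theorem~\ref{thm:abelian3}; rather, it extends that theorem's proof idea to non-normal subgroups, showing the relevant Schreier graphs are still unions of cycles), while the paper's construction produces an explicit global coloring and reuses the Dedekind case as a black box. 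One presentational note: you should state explicitly that $C_1\neq\emptyset$ guarantees $H\not\leq A\times\{0\}$ whenever $c\in C_0$ or $|C_1|\geq 2$, so that your structural description applies exactly in the cases where you use it -- but this is immediate and you essentially say it.
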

\begin{proof} Let $G = {\rm Cay}({\rm Dih}(\Gamma),C)$ be the Cayley graph of a generalized dihedral group ${\rm Dih}(\Gamma) = \Gamma \rtimes \mathbb Z_2$ minimally generated by  $C$. We denote $C_i = C \cap (\Gamma \times \{i\})$ for $i = 0,1$, and consider $H := (\Gamma \times \{0\}) \cap \langle C_1 \rangle$. By the minimality of $C$ we have that:
\begin{enumerate}
\item[(a)] the vertices corresponding to elements of $H$ form an independent set in $G$,
\item[(b)] The cosets of $C_0$ minimally generate the abelian group $(\Gamma \times \{0\}) / H$. 
\end{enumerate}
By (b) and Theorem \ref{thm:abelian3}, it follows that ${\rm Cay}( (\Gamma \times \{0\}) / H, C_0 )$ is $3$-colorable. We consider $\tilde{f}:(\Gamma \times \{0\}) / H \longrightarrow \{0,1,2\}$ a proper coloring. By (a), we have that ${f}: \Gamma \times \{0\} \longrightarrow \{0,1,2\}$ defined as $f(g,0) := \tilde{f}((g,0) * H)$ is also a proper coloring of ${\rm Cay}( \Gamma \times \{0\}, C_0 )$. Now we choose $(y,1) \in C_1$ and consider
\[ \begin{array}{rrcl} h: 	&  {\rm Dih}(\Gamma) & \longrightarrow & \{0,1,2\}  \\
							& (g,0)  & \mapsto & f(g,0) \\
							& (g,1) & \mapsto & f(g-y,0) + 1 \bmod 3. \end{array} \]
We claim that $h$ is a proper $3$-coloring of $G$. Indeed, consider $(g_1,t_1), (g_2,t_2)$ two adjacent vertices of $G$ and let us prove that $f(g_1,t_1) \neq f(g_2,t_2)$. We separate the proof in three cases:
\begin{enumerate}
\item If $t_1 = t_2 = 0$, then $(g_1,0)$ and $(g_2,0)$ are adjacent in ${\rm Cay}( \Gamma \times \{0\}, C_0 )$ and hence, $h(g_1,0) = f(g_1,0) \neq f(g_2,0) = h(g_2,0)$.
\item If $t_1 = t_2 = 1$, then $(g_1,1)$ and $(g_2,1)$ are adjacent if and only if  there exist $(x,0) \in C_0$ such that $(g_1,1) = (g_2,1) * (x,0) = (g_2 - x,1)$. Then, \[ (g_1 - y, 0) = (g_1,1) * (y,1) = (g_2-x,1) * (y,1) = (g_2 - y - x, 0)\] and, hence, $(g_1-y,0)$ and $(g_2 - y, 0)$ are adjacent in ${\rm Cay}( \Gamma \times \{0\}, C_0 )$. Thus, we conclude that $h(g_1,1) = f(g_1-y,0) + 1 \neq f(g_2 - y,0) + 1 = h(g_2,1)$.
\item If $t_1 = 0$, $t_2 = 1$, then $(g_1,0)$ and $(g_2,1)$ are adjacent if and only if  there exist $(z,1) \in C_1$ such that $(g_2,1) = (g_1,0) * (z,1) = (g_1+z,1)$. Then,  \[ h(g_2,1) =  f(g_1 + z - y,0) + 1 \bmod 3 = f(g_1,0) + 1 \bmod 3 \neq f(g_1,0) = h(g_1,0),\]
where the equality  $f(g_1 + z - y,0) = f(g_1,0)$ follows from the fact that $(z-y,0) = (z,1)*(y,1) \in H$. 
\end{enumerate}

This concludes the proof.
\end{proof}

For the next result, we denote by $\Phi(\Gamma)$ the \emph{Frattini subgroup} of $\Gamma$, that is, the intersection of all maximal proper subgroups of $\Gamma$, or $\Phi(\Gamma) = \{e\}$ if it has no maximal proper subgroups. 

\begin{lemma}\label{lm:Frattini}
Let $\Gamma$ be a group with Frattini subgroup $\Phi(\Gamma)$, then:
$$\chi_{\mathrm{max}}(\Gamma)\leq \chi_{\mathrm{max}}(\Gamma/\Phi(\Gamma)).$$
\end{lemma}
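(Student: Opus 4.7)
The plan is to exploit the defining non-generator property of $\Phi(\Gamma)$ in order to lift a good coloring from $\Gamma/\Phi(\Gamma)$ to $\Gamma$ via Lemma~\ref{lm:babai}. Write $\pi\colon \Gamma\to\Gamma/\Phi(\Gamma)$ for the canonical projection. Recall that $\Phi(\Gamma)$ consists of \emph{non-generators}: if $\langle S,x\rangle=\Gamma$ for some $S\subseteq\Gamma$ and some $x\in\Phi(\Gamma)$, then already $\langle S\rangle=\Gamma$. Since every proper subgroup of a finitely generated group is contained in a maximal one, the same statement holds with a single $x$ replaced by all of $\Phi(\Gamma)$: indeed, if $\langle S\rangle\neq\Gamma$ then $\langle S\rangle\subseteq M$ for some maximal proper subgroup $M$, and $M\supseteq\Phi(\Gamma)$ by definition, so $\langle S,\Phi(\Gamma)\rangle\subseteq M\neq\Gamma$.

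The central step is to show that if $C$ is any minimal generating set of $\Gamma$, then $\pi(C)$ is a minimal generating set of $\Gamma/\Phi(\Gamma)$ of the same cardinality. Three sub-claims are needed. First, $\pi(C)$ generates $\Gamma/\Phi(\Gamma)$, which is immediate. Second, $\pi$ is injective on $C$: if $c_1\neq c_2$ in $C$ satisfied $\pi(c_1)=\pi(c_2)$, then $c_1c_2^{-1}\in\Phi(\Gamma)$, and replacing $c_1$ by $c_1c_2^{-1}$ in $C$ (using that $c_2\in C-\{c_1\}$) shows $\langle(C-\{c_1\})\cup\{c_1c_2^{-1}\}\rangle=\Gamma$; the non-generator property then forces $\langle C-\{c_1\}\rangle=\Gamma$, contradicting minimality of $C$. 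Third, $\pi(C)$ is itself a minimal generating set: if some $\pi(c)$ were redundant, then $\langle C-\{c\},\Phi(\Gamma)\rangle=\Gamma$, so by the extended non-generator property $\langle C-\{c\}\rangle=\Gamma$, again a contradiction.

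With this in place the bound follows easily. By Lemma~\ref{lm:babai} applied to the normal subgroup $\Phi(\Gamma)$ and the set $\pi(C)\subseteq\Gamma/\Phi(\Gamma)$,
\[
\chi\bigl({\rm Cay}(\Gamma,\pi(C)\cdot\Phi(\Gamma))\bigr)\;\leq\;\chi\bigl({\rm Cay}(\Gamma/\Phi(\Gamma),\pi(C))\bigr)\;\leq\;\chi_{\mathrm{max}}(\Gamma/\Phi(\Gamma)),
\]
where the last inequality uses that $\pi(C)$ is a minimal generating set. Since $C\subseteq \pi(C)\cdot\Phi(\Gamma)$, the graph ${\rm Cay}(\Gamma,C)$ is a spanning subgraph of ${\rm Cay}(\Gamma,\pi(C)\cdot\Phi(\Gamma))$, so $\chi({\rm Cay}(\Gamma,C))\leq\chi_{\mathrm{max}}(\Gamma/\Phi(\Gamma))$. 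Taking the maximum over all minimal generating sets $C$ of $\Gamma$ yields the desired inequality.

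The routine part is assembling the pieces at the end; the one place where care is genuinely needed is the lifting of the single-element non-generator property to the full subgroup $\Phi(\Gamma)$, i.e., the statement $\langle S,\Phi(\Gamma)\rangle=\Gamma\Rightarrow\langle S\rangle=\Gamma$. This is the main obstacle, and it is handled by the maximal-subgroup argument above, which is valid in any (finitely generated) group; this is also the natural hypothesis under which $\Phi(\Gamma)$ is well behaved and under which the paper's subsequent application to nilpotent groups will be made.
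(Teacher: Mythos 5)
Your proof is correct and follows essentially the same route as the paper: apply Lemma~\ref{lm:babai} to $N=\Phi(\Gamma)$ after establishing that the image of a minimal generating set $C$ in $\Gamma/\Phi(\Gamma)$ is again a minimal generating set. The only difference is that you derive this (and the disjointness of $C$ from $\Phi(\Gamma)$) from the non-generator characterization, whereas the paper cites these standard Frattini facts directly.
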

\begin{proof}
For $\Gamma$ a group and $C$ any minimal generating set. The following remarkable properties of the Frattini subgroup are well known  (see, e.g., \cite[Section 5.2]{Robinson}):
    \begin{enumerate}
        \item[{\rm (1)}] $\Phi(\Gamma)$ is a characteristic subgroup of $\Gamma$ and, hence, $\Phi(\Gamma) \unlhd \Gamma$,
        \item[{\rm (2)}] $\Phi(\Gamma)\cap C=\emptyset$, and
        \item[{\rm (3)}] $C/\Phi(\Gamma) = \{c \cdot \Phi(\Gamma) \, \vert \, c \in C\}$ is a minimal generating set of $\Gamma/\Phi(\Gamma)$.
    \end{enumerate}
  By (1), (2) and Lemma \ref{lm:babai}, one has that $\chi({\rm Cay}(\Gamma, C)) \leq \chi({\rm Cay}(\Gamma/\phi(\Gamma), C / \Phi(\Gamma)))$. By (3) $\chi({\rm Cay}(\Gamma/\phi(\Gamma), C / \Phi(\Gamma))) \leq \chi_{\rm max}(\Gamma/\phi(\Gamma)),$ and the result follows.
\end{proof}

For a group $(\Gamma,\cdot)$, we denote by $\Gamma'$ its commutator subgroup, i.e., \[\Gamma' = \langle x \cdot y \cdot x^{-1} \cdot y^{-1} \, \vert \, x,y \in \Gamma\rangle.\]

\begin{theorem}\label{thm:comfrattini} Let $\Gamma$ be a finitely generated group such  that its commutator $\Gamma'$ is contained in its Frattini subgroup $\Phi(\Gamma)$. Then, every minimal Cayley graph of $\Gamma$ is $3$-colorable. This includes nilpotent groups.
\end{theorem}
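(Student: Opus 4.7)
The plan is to chain Lemma~\ref{lm:Frattini} with Theorem~\ref{thm:abelian3}. By Lemma~\ref{lm:Frattini}, $\chi_{\mathrm{max}}(\Gamma) \leq \chi_{\mathrm{max}}(\Gamma/\Phi(\Gamma))$, so it is enough to $3$-color minimal Cayley graphs of the Frattini quotient. The hypothesis $\Gamma' \subseteq \Phi(\Gamma)$ forces $\Gamma/\Phi(\Gamma)$ to be abelian, since any quotient by a subgroup containing the commutator is abelian; in particular this quotient is Dedekind and, being a quotient of a finitely generated group, finitely generated. Theorem~\ref{thm:abelian3} therefore yields $\chi_{\mathrm{max}}(\Gamma/\Phi(\Gamma)) \leq 3$, and combining the two bounds proves the main claim.

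For the addendum about nilpotent groups, it remains to verify $\Gamma' \subseteq \Phi(\Gamma)$ whenever $\Gamma$ is finitely generated nilpotent. My plan is to invoke two classical facts: in any nilpotent group every maximal subgroup is normal (a consequence of the normalizer condition), and in a finitely generated nilpotent group every proper subgroup is contained in some maximal one (finitely generated nilpotent groups are polycyclic and satisfy the maximum condition on subgroups). For any such maximal $M \unlhd \Gamma$, the quotient $\Gamma/M$ is a simple nilpotent group; since every nilpotent group has nontrivial center, $\Gamma/M$ must coincide with its center, so it is simple abelian and hence cyclic of prime order. Thus $\Gamma' \leq M$ for every maximal $M$, and intersecting over all maximal subgroups gives $\Gamma' \subseteq \Phi(\Gamma)$, reducing the nilpotent case to the previous paragraph.

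I do not foresee any genuine obstacle: the first paragraph is a purely formal composition of the preceding lemmas, and the only non-trivial ingredient in the second is the structural description of maximal subgroups of nilpotent groups, which is entirely standard and requires no novel argument. The one subtle point worth flagging is that the chain of inequalities silently requires $\Gamma/\Phi(\Gamma)$ to admit \emph{finite} minimal generating sets so that Theorem~\ref{thm:abelian3} applies in the form stated, but this is immediate from finite generation of $\Gamma$.
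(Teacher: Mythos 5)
Your proof is correct and follows essentially the same route as the paper: reduce to the Frattini quotient via Lemma~\ref{lm:Frattini} (equivalently, Lemma~\ref{lm:babai} plus the standard Frattini properties), note that $\Gamma'\subseteq\Phi(\Gamma)$ makes that quotient abelian, and apply Theorem~\ref{thm:abelian3}. Your added verification that finitely generated nilpotent groups satisfy $\Gamma'\subseteq\Phi(\Gamma)$ (via normality and prime index of maximal subgroups) is standard and correct; the paper simply cites this fact rather than proving it.
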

\begin{proof}Let $G = {\rm Cay}(\Gamma,C)$ be the Cayley graph of $\Gamma$ with respect to a minimal set of generators $C$.  
Since $\Gamma' \subseteq \Phi(\Gamma)$, then it follows that $\Gamma / \Phi(\Gamma)$ is commutative and minimally generated by $C / \Phi(\Gamma)$. Then, by Theorem \ref{thm:abelian3}, ${\rm Cay}(\Gamma / \Phi(\Gamma), C / \Phi(\Gamma))$ is $3$-colorable. But now we are done because since $C \subset (C / \Phi(\Gamma)) \cdot \Phi(\Gamma)$, by Lemma \ref{lm:babai} we have that ${\rm Cay}( \Gamma, C)$ is $3$-colorable.  
\end{proof}

A group $\Gamma$ satisfies that $\Gamma' < \Phi(\Gamma)$ if and only if all its maximal subgroups have prime index (see \cite[Theorem A]{Myro} for other characterizations of these groups). In particular, every nilpotent group satisfies that $\Gamma' \leq \Phi(\Gamma)$. A result of Wielandt \cite[5.2.16]{Robinson} proves that for a finite group $\Gamma$, one has that $\Gamma' \leq \Phi(\Gamma)$ if and only if $\Gamma$ is nilpotent. However, there are non-nilpotent infinite groups whose commutator is contained in its corresponding Frattini subgroup. A famous such group is the Grigorchuk group, which is a finitely generated $2$-group in which all maximal subgroups have index $2$. In fact $\Gamma' = \Phi(\Gamma)$, and has index $8$ in $\Gamma$~\cite{Gri80}. Other examples are considered in~\cite{FT21}.

Let us end this section with a general upper bound for the chromatic number of (semi)minimal Cayley graphs. For this purpose given a positive integer $n$, we denote by $W_b(n)$ the \emph{binary Lambert W function}, i.e., $n=W_b(n)2^{W_b(n)}$.
\begin{proposition}\label{pr:general}
  Let $\Gamma$ be a group of order $n$ and $C$ be a generating set. We have
  $$\chi(\Gamma, C)\leq \begin{cases}
                        2\log_2n & \text{if }C\text{ is semiminimal,}\\
                        2W_b(n) & \text{if }C\text{ is minimal.}
                       \end{cases}$$
Moreover, $W_b(n)<\log_2n-\log_2\log_2(\frac{n}{\log_2n})$.
\end{proposition}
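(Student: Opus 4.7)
The overall strategy is to bound the chromatic number by the maximum degree of $\Cay(\Gamma,C)$ via Brooks' theorem, and separately bound the maximum degree through a subgroup-chain argument using the (semi)minimality of $C$.

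For the semiminimal bound, order $C = \{c_1,\ldots,c_k\}$ so that $c_j \notin \langle c_1,\ldots,c_{j-1}\rangle$ for every $j$. The resulting chain $\{e\} \subsetneq \langle c_1\rangle \subsetneq \cdots \subsetneq \Gamma$ is strictly ascending, so by Lagrange's theorem each index is at least $2$ and $n \geq 2^k$, giving $|C| \leq \log_2 n$. The maximum degree of $\Cay(\Gamma,C)$ is at most $|C \cup C^{-1}| \leq 2|C|$, so Brooks' theorem yields $\chi \leq 2|C| \leq 2\log_2 n$. The exceptional cases of Brooks --- a complete graph or an odd cycle --- force $n$ to be small and can be checked directly (for instance $\Cay(\mathbb{Z}_4,\{2,1\}) = K_4$ satisfies $\chi = 4 = 2\log_2 4$).

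For the minimal bound I would prove the sharper estimate $\Delta \leq 2W_b(n)$ on the maximum degree and conclude $\chi \leq \Delta \leq 2W_b(n)$ by Brooks once more. Writing $k = |C|$ and letting $t$ be the number of involutions in $C$, we have $\Delta = 2k - t$, so the target becomes the refined inequality $(k - t/2)\cdot 2^{k - t/2} \leq n$. The plan is to refine the chain argument: if a non-involution generator $c_j$ contributes index $[H_j:H_{j-1}] = 2$, then necessarily $c_j^2 \in H_{j-1}$, a constraint that minimality of $C$ can only accommodate at the cost of enlarging another index in the chain. The prototypical tight example is $\Cay(Q_8,\{i,j\})$, where $n = 8$, $k = 2$, $t = 0$, $\Delta = 4$, and $(\Delta/2)\cdot 2^{\Delta/2} = 8 = n$, confirming both the functional form and the tightness of the estimate.

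For the auxiliary inequality, write $w = W_b(n)$, so $n = w\cdot 2^w$ and equivalently $w = \log_2 n - \log_2 w$. For $n > 2$ one has $w < \log_2 n$, hence $\log_2 w < \log_2\log_2 n$, which gives $w > \log_2 n - \log_2\log_2 n = \log_2(n/\log_2 n)$; substituting into $w = \log_2 n - \log_2 w$ yields the claim $w < \log_2 n - \log_2\log_2(n/\log_2 n)$.

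The main obstacle is the refined chain inequality in the minimal case. The trivial bound $n \geq 2^k$ is insufficient when $t = 0$ (many non-involutions), so the proof must exploit that minimality prevents non-involution generators from repeatedly contributing only index $2$ without compensating growth elsewhere in the chain, yielding the $W_b$-shaped degree estimate rather than the weaker $\log_2 n$ one.
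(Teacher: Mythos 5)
Your semiminimal bound and the derivation of the auxiliary inequality for $W_b(n)$ follow the paper's argument, but the minimal case has a genuine gap: the intermediate claim you are aiming for, namely the degree bound $\Delta\leq 2W_b(n)$ (equivalently $(k-t/2)2^{k-t/2}\leq n$), is not only left unproved -- you yourself flag the ``refined chain inequality'' as the main obstacle -- it is in fact false. Take $\Gamma=\mathbb{Z}_4\times\mathbb{Z}_2\times\mathbb{Z}_2$ with the minimal generating set $C=\{(1,0,0),(1,1,0),(1,0,1)\}$: all three generators have order $4$, so $k=3$, $t=0$, $\Delta=6$, while $n=16$ gives $2W_b(16)\approx 5.49<6$. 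So no refinement of the chain argument can deliver a degree bound of the required shape, and Brooks' theorem cannot close the minimal case along this route.

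The paper's proof does not bound the degree at all in the minimal case. It invokes Lemma~\ref{lm:Schreier}: since $C$ is minimal, $\chi(\Cay(\Gamma,C))$ is at most the maximum of $\chi(\Cay(\Gamma/\langle C-\{c\}\rangle,\{c\}))$ over $c\in C$, and each such Schreier graph has at most $n/2^{k-1}$ vertices because $\langle C-\{c\}\rangle$ has order at least $2^{k-1}$ by the same Lagrange chain argument. Combining this with the degree bound $2k$ from the semiminimal part yields $\chi(\Cay(\Gamma,C))\leq\min\bigl(2k,\,n/2^{k-1}\bigr)$, whose maximum over $k$ is attained at $k=W_b(n)$ and equals $2W_b(n)$. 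In your counterexample above this gives $\min(6,4)=4\leq 2W_b(16)$, consistent with the proposition. A secondary, smaller issue: in the semiminimal case the Brooks exceptions are not dispatched by ``$n$ is small''; the degree constraint alone does not rule out $K_5$ or $K_6$ (for which $\chi>2\log_2 n$), and the paper excludes them via Proposition~\ref{prop:clique}, which shows a semiminimal Cayley graph has clique number at most $4$.
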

\begin{proof}
 Let $C=(c_1,\ldots,c_k)$ a semiminimal generating set of $\Gamma$. One has that $\Gamma_{i-1}:= \langle c_1,\ldots,c_{i-1} \rangle$ is a proper subgroup of $\Gamma_{i}$ for all $1\leq i \leq k$ and, by Lagrange's Theorem, we have $2^i \leq |\Gamma_{i}|$. Hence $k=|C|\leq \log_2(|\Gamma|)=\log_2n$. Thus, the maximum degree of $\Cay(G,C)$ is at most $2\log_2n$. By Brook's Theorem, this is an upper bound for $\chi(\Cay(G,C))$ except if $\Cay(G,C)$ is an odd cycle or a clique. However, in the first case $\chi(\Cay(G,C))=3\leq 2\log_2(2\ell+1)$ for all $\ell\geq 1$. If otherwise $\Cay(G,C)=K_n$ is a clique, by Proposition~\ref{prop:clique} we know that $n=4$ and $\chi(\Cay(G,C))=4\leq 2\log_2(4)$.
 
 Now suppose $C$ minimal. Again by Lagrange's Theorem, for any $c\in C$ the subgroup $\langle C - \{c\} \rangle$ has order at least $2^{k-1}$. Hence, the Schreier graph of this subgroup has at most $\frac{n}{2^{k-1}}$ vertices, and $\chi(\Cay(\Gamma/\langle C - \{c\}\rangle,\{c\})) \leq \frac{n}{2^{k-1}}$. Thus, with the first part and Lemma~\ref{lm:Schreier} we have that $\chi(\Cay(\Gamma,C))\leq\min(2k,\frac{n}{2^{k-1}})$, which is maximised exactly if $k=W_b(n)$.
 By definition we have that $$W_b(n)=\log_2\left(\frac{n}{W_b(n)}\right)=\log_2\left(\frac{n}{\log_2\frac{n}{W_b(n)}}\right).$$ Moreover, 
 we clearly have clearly $W_b(n)<\log_2(n)$, then we finally get that  $$W_b(n) < \log_2\left(\frac{n}{\log_2\frac{n}{\log_2(n)}}\right)=\log_2n-\log_2\log_2\left(\frac{n}{\log_2n}\right),$$ which concludes the proof.
\end{proof}

Note that the bounds in the previous proposition depend on the maximal size of a minimal generating set of a group $\Gamma$. This parameter has been studied, see e.g.~\cite{LMS21,CC02}. 

\section{Lower bounds}
% A generating set $C$ of $\Gamma$ is called \emph{semiminimal} (a.k.a quasi-irreducible) if $C$ can be linearly orderd such that no member of $C$ is contained in the subgroup generated by its predecessors. In~\cite[Proposition 3.2]{B78} it is shown that if $C$ is a semiminimal genrating set of $\Gamma$, then ${\rm Cay}(\Gamma,C)$ is a \emph{one popular color} graph, i.e., its edged can be colored such that \begin{enumerate}
%                                                                                                \item each vertex is incident with at most two edges of any color,
%                                                                                                \item each circuit contains at least two edges of the same color.
%                                                                                               \end{enumerate}
%  

We begin this section by presenting a minimal Cayley graph of chromatic number $4$ and a semiminimal Cayley graph of chromatic number $7$.
 
 \begin{proposition}\label{prop:lb}
  There exist minimal Cayley graphs of chromatic number $4$ and semiminimal Cayley graphs of chromatic number $7$.
 \end{proposition}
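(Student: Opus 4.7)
The plan is to furnish two explicit constructions, one for each assertion, and verify their chromatic numbers directly.

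For the minimal Cayley graph of chromatic number $4$ on a soluble group, I would look among small soluble groups that are neither nilpotent nor generalized dihedral, since both families are ruled out by Theorems~\ref{thm:abelian3} and~\ref{thm:gendihedral3}. Natural candidates include the symmetric group $S_4$ or small semidirect products of the form $\Z_n \rtimes \Z_m$. The strategy is to choose a minimal generating set $C$ so that $\Cay(\Gamma,C)$ contains a clear obstruction to $3$-coloring---most cleanly a copy of $K_4$, which arises whenever four group elements (for instance a coset of a Klein four-subgroup) have all six pairwise differences in $C\cup C^{-1}$. Once minimality and the presence of $K_4$ are verified, the matching upper bound $\chi\leq 4$ follows either from an explicit $4$-coloring or, when the maximum degree equals $3$, from Brooks' theorem.

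For the semiminimal Cayley graph of chromatic number $7$ on a nilpotent group, the strategy exploits the key freedom distinguishing semiminimality from minimality: a semiminimal set may be as long as a strictly increasing chain of subgroups, which in finite $p$-groups is typically much longer than any minimal generating set. The target is to realize a $K_7$ subgraph inside $\Cay(\Gamma,C)$. I would take a finite nilpotent group (for example an abelian $p$-group) of sufficient rank, order its generators along a maximal subgroup chain to secure semiminimality, and arrange seven group elements to be pairwise joined by elements of $C\cup C^{-1}$. The matching upper bound $\chi\leq 7$ would then follow either from an explicit coloring, or a posteriori from Proposition~\ref{pr:general}.

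The main obstacle in both constructions is the tension between the structural constraint (minimality or semiminimality) and the combinatorial density required to raise the chromatic number. Small Cayley graphs are usually $3$-colorable unless one deliberately plants a $K_4$, and placing such a $K_4$ while keeping $C$ minimal requires a careful choice of the Klein four-subgroup and its interaction with the remaining generators. For the semiminimal case, the ordering forces the generating set to follow a subgroup chain, constraining where a clique-on-seven pattern can live; the group and the chain must be chosen simultaneously so that enough pairwise differences among the seven chosen vertices appear as generators. In both cases the final step is a finite verification, but the real work is finding the right algebraic configuration.
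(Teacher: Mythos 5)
Your overall plan---exhibit explicit examples and verify their chromatic numbers---matches the paper's intent, but the specific mechanism you propose for forcing the lower bounds cannot work, and this is a genuine gap rather than a matter of detail. You propose to plant a $K_4$ inside a minimal Cayley graph and a $K_7$ inside a semiminimal one. Both are impossible: as recalled in this very paper (and already in Babai's 1978 work), minimal Cayley graphs have clique number at most $3$, and Proposition~\ref{prop:clique} shows that semiminimal Cayley graphs have clique number at most $4$. The ``no lonely color'' and ``one popular color'' edge colorings that every (semi)minimal Cayley graph carries forbid exactly the dense configurations you are trying to engineer; for instance, a $K_4$ in a minimal Cayley graph would contain a $K_4-e$, which Babai showed is not no lonely color. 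So any example witnessing $\chi=4$ (minimal) or $\chi=7$ (semiminimal) must have chromatic number strictly exceeding its clique number, and no clique-based certificate for the lower bound exists.

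The paper's actual proof sidesteps this by giving concrete examples whose chromatic numbers are verified by computer: the minimal Cayley graph $\Cay(\mathbb{Z}_3\rtimes\mathbb{Z}_7,\{(0,1),(1,0)\})$ (a $21$-vertex, $4$-regular, $K_4$-free graph with $\chi=4$) and the semiminimal Cayley graph of the generalized quaternion group $Q_{32}$ with connection set $(b^2, a^4, a^{5}b, a^{3}b, a^{6}b)$, which has $\chi=7$ despite $\omega\leq 4$. Your instinct to look at soluble, non-nilpotent, non-dihedral groups for the first example is sound (and $\mathbb{Z}_3\rtimes\mathbb{Z}_7$ is indeed of the shape $\Z_n\rtimes\Z_m$ you suggest), but your plan to take an abelian $p$-group for the second example also fails for a structural reason: Theorem~\ref{thm:abelian3} does not apply to semiminimal sets, yet the chromatic-number obstruction still cannot be a clique there. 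In short, you would need to replace the clique-hunting step with either a computer search or some other non-clique lower-bound argument (odd-girth/fractional-chromatic or case analysis), and as written the proposal provides neither.
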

 \begin{proof}
  For both graphs we computed the chromatic number by computer.
  The first graph is the minimal Cayley graph $\Cay(\mathbb{Z}_3\rtimes\mathbb{Z}_7,\{(0,1),(1,0)\})$. It is depicted on the left side of Figure~\ref{fig:examples} where the grey edges correspond to $(1,0)$ and the purple edges correspond to $(0,1)$. The second graph is the semiminimal Cayley graph $\Cay(Q_{32},C)$, where $Q_{32} = \langle a, b \, \vert \, a^{16} = b^4 = 1, a^8 = b^2, aba= b \rangle$ is the generalized quaternion group or order $32$ and $C = (b^2, a^4, a^{5}b,a^{3}b, a^{6}b)$. The graph $G$ depicted in the right of Figure~\ref{fig:examples} is such that $G\boxtimes K_2=\Cay(Q_{32},C)$, where $\boxtimes$ denotes the strong graph product. It is induced by vertices $\{b^i a^j \, \vert \, 0 \leq i \leq 1, 0 \leq j \leq 7\}$, edges corresponding to right multiplication by $a^4, a^5b, a^4b$ and $a^{14}b$ are depicted in blue, gray, green and violet, respectively. \end{proof}

\begin{figure}[h!]
\centering
\includegraphics[width=.8\textwidth]{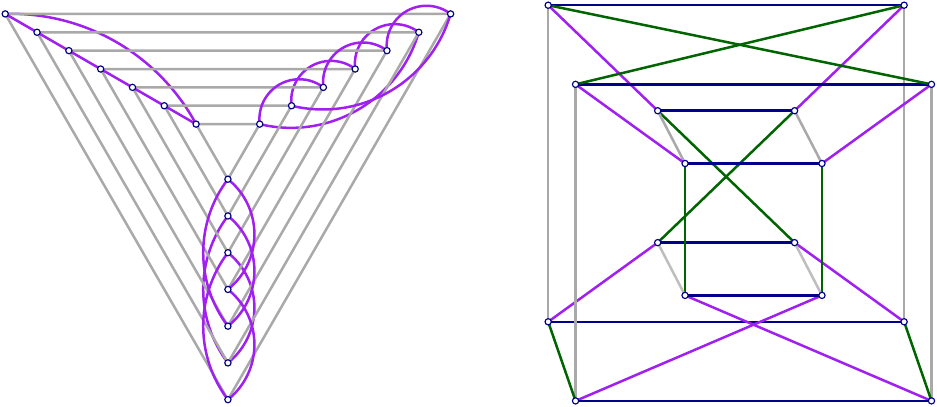}
\caption{\label{fig:examples} The graphs $\Cay(\mathbb{Z}_3\rtimes\mathbb{Z}_7,\{(0,1),(1,0)\})$ and $G$ such that $G\boxtimes K_2=\Cay(Q_{32},C)$ from Proposition~\ref{prop:lb}.}
\end{figure}

Already in~\cite{B78} it is shown that minimal Cayley graphs have clique number at most $3$ and it also follows from the results there, that semiminimal Cayley graphs have bounded clique number. We first make this precise.

\begin{proposition}\label{prop:clique}
 For any one popular color graph $G$ we have $\omega(G)\leq 5$ and this is tight. For a semiminimal Cayley graph $\Cay(\Gamma, C)$ we have $\omega(\Cay(\Gamma, C))\leq 4$ and this is tight.
\end{proposition}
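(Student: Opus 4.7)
My plan splits along the two claims. For the upper bound $\omega(G)\leq 5$ on a one popular color graph, I would restrict the given edge coloring to any $K_n\subseteq G$ and double-count pairs $(T,e)$ where $T$ is a triangle of $K_n$ and $e$ is an edge of $T$ sharing its color with some other edge of $T$. For a fixed edge $e=uv$ of color $c$, condition~(1) gives at most one further $c$-edge at each of $u$ and $v$, so $e$ lies in at most two such pairs, whence the total is at most $2\binom{n}{2}$. Condition~(2') forces every triangle to repeat some color and thus to contribute at least two pairs, whence the total is at least $2\binom{n}{3}$. Combining yields $n\leq 5$. Tightness is realised by coloring $K_5$ with its decomposition into two edge-disjoint Hamilton cycles: every cycle of $K_5$ has length $3$, $4$ or $5$, and hence by pigeonhole repeats one of the two colors.

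For the semiminimal Cayley bound I argue by contradiction that $K_5\not\subseteq\Cay(\Gamma,C)$ when $C$ is semiminimal. Equality must hold throughout the double count, forcing every triangle to use exactly two colors and every edge to have a same-color partner at each endpoint. Consequently each color class restricted to $K_5$ is $2$-regular on its non-isolated vertices and hence a disjoint union of cycles inside $K_5$, i.e.\ a single $C_3$, $C_4$ or $C_5$. I would rule out the first two cases by triangle chasing: a monochromatic $C_3$ of color $c$ on $\{u,v,w\}$ forces (via the three triangles $uvx$, $uwx$, $vwx$, each of whose edges incident to the fifth vertex $x$ must be non-$c$) the three edges $ux$, $vx$, $wx$ to share a color, contradicting~(1) at $x$; a monochromatic $C_4$ on $\{u,v,w,x\}$ similarly forces the four edges from $\{u,v,w,x\}$ to the fifth vertex $y$ to carry a common color. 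Hence each color class on $K_5$ is a Hamilton $C_5$, and exactly two colors $r,s\in C$ appear.

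Next I translate this to group-theoretic data. Writing the $r$-colored Hamilton cycle in cyclic order as $v_0,\ldots,v_4$, each step gives $v_{i+1}=v_ir^{\epsilon_i}$ with $\epsilon_i\in\{\pm 1\}$, and closing the cycle yields $r^{\sigma}=e$ where $\sigma=\sum_i\epsilon_i\in\{\pm 1,\pm 3,\pm 5\}$ is odd. Distinctness of the $v_i$ forces $\mathrm{ord}(r)\geq 5$, so $\mathrm{ord}(r)=5$, $\sigma=\pm 5$, and $K_5=\{g,gr,gr^2,gr^3,gr^4\}$ for some $g\in\Gamma$. The same argument applied to $s$ yields $\langle r\rangle=\langle s\rangle$, so in any linear order on $C$ the later of $r,s$ lies in the cyclic subgroup generated by the earlier one, contradicting semiminimality. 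Tightness of $4$ is witnessed by $\Cay(\mathbb{Z}_4,(2,1))=K_4$, which is semiminimal since $1\notin\langle 2\rangle=\{0,2\}$.

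The main obstacle will be the structural rigidity of the $K_5$ coloring -- squeezing out of equality in the double count that every color class is a Hamilton cycle -- together with the sign-sum calculation identifying a monochromatic $5$-cycle in a Cayley graph with a single group orbit. Once these are in place the contradiction with semiminimality is immediate.
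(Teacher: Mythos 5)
Your proof is correct, but the combinatorial core is genuinely different from the paper's. For the bound $\omega(G)\leq 5$ the paper runs an explicit case analysis on a hypothetical one popular color coloring of $K_6$: it fixes an edge $ab$ of color $1$, examines which of the four triangles through $ab$ can have popular color $1$ versus other colors, and derives a vertex of degree $3$ in one color class. Your double count of pairs $(T,e)$, giving $2\binom{n}{3}\leq 2\binom{n}{2}$ and hence $n\leq 5$ directly, is shorter and cleaner, and it pays a second dividend: for $n=5$ the two sides coincide, so equality analysis immediately forces every color class to be $2$-regular on its support with no monochromatic triangle, reducing the structure of any one popular coloring of $K_5$ to ruling out a monochromatic $C_4$ (your $C_3$ case is in fact already excluded by the no-monochromatic-triangle consequence, so that subcase is redundant but harmless). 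The paper instead re-runs a second, longer case analysis on $K_5$ to reach the same conclusion that the coloring must be a decomposition into two Hamilton $5$-cycles. From that point on the two arguments coincide in substance: the paper simply asserts that the two colors generate the same cyclic subgroup of order $5$, contradicting semiminimality, whereas you supply the missing detail via the sign-sum computation $r^{\sigma}=e$ with $\sigma$ odd and $|\sigma|\leq 5$, which correctly pins down $\mathrm{ord}(r)=5$ and identifies the clique with a coset of $\langle r\rangle=\langle s\rangle$. Both tightness examples ($K_5$ decomposed into two $5$-cycles, and $\Cay(\mathbb{Z}_4,(2,1))=K_4$) are the same as in the paper. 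In short: same skeleton, but your counting argument replaces two ad hoc case analyses with one uniform computation whose equality case does double duty.
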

\begin{proof}
 Suppose that there exists a one popular color coloring of $K_6$ and consider an edge $ab$ of color $1$ and the four triangles $abc,abd,abe,abf$.  At most two among these four triangles have popular color $1$. If two of these triangles, say $abc,abd$ have one popular color among different colors $2,3$, then the triangle $acd$ has no popular color. Hence two triangles say $abc,abd$ have  popular color $2$ then the triangles $abe,abf$ must have popular color $1$. So assume that the edge $ae$ is colored in $1$. But then the edges $ec$ and $ed$ both must be of color $1$ to make triangles $aec$ and $aed$ have a popular color. But then the degree of $e$ in color $1$ is $3$. 
 
 To show tightness just edge-color $K_5$ with two colors each inducing a cycle of length $5$. Since we are only using two  colors, it is straight-forward  to check that all cycles have a popular color. %It is straight-forward to check that all triangles have a popular color, and since each cycle is a sum of triangles, this is easily seen to yield that all cycles have a popular color. 
 Let us now, see that this is the only way to one popular color color the $K_5$. Let $ab$ an edge of color $1$ and consider the three triangles $abc,abd,abe$. If two of these triangles, say $abc,abd$ have one popular color because of different colors $2,3$, then the triangle $acd$ has no popular color. If two triangles say $abc,abd$ have one popular color because of color $2$ then the triangle $abe$ must have popular color $1$, say the edge $be$ is of color $1$ and both the edges $ce,de$ must be of color $1$ in order to make triangle  $bce,bde$ have a popular color. But then the degree of $e$ in color $1$ is $3$. Suppose now that only the triangle $abc$ has popular color $2$, then both $abd,abe$ have popular color $1$ and without loss of generality we have edges $ad,be$ of color $1$. Now $bd$ cannot be of color $1$  (because the degree of $b$ in color $1$ would be $3$), and cannot be of color $3$, because this would force the edge $cd$ to be of color $3$, but then the triangle $acd$ would have no popular color. Hence $bd$ is of color $2$, and by an analogous argument also $ae$ is of color $2$. Now, $cd$ is forced to be of color $1$, $de$ of color $2$, and $ce$ of color $1$. The resulting two-coloring is a decomposition into two cycles of length $5$.
 
 Suppose now that a semiminimal Cayley graph $\Cay(\Gamma, C)$ contains a $K_5$, then as argued above its one popular color coloring consists of two cycles of length $5$. Hence the corresponding elements $c,c'\in C$ generate the same cyclic subgroup of order $5$ of $\Gamma$. Hence $C$ is not semiminimal. Thus, $\omega(\Cay(\Gamma, C))\leq 4$. To see that this is tight simply consider $\Cay(\mathbb{Z}_4,\{2,1\}) = K_4$.
\end{proof}

We finish by giving a negative answer to the stronger version of Question \ref{conj:babai_old}, i.e., we provide a family of one popular color graphs with unbounded chromatic number. The construction we propose is based on one of the fundamental constructions for triangle-free graphs of arbitrary chromatic number due to Tutte (alias Blanche Descartes)~\cite{Des54}, also see Ne{\v{s}}et{\v{r}}il's survey~\cite{N13} for a more detailed discussion.

\begin{theorem} \label{th:countbabai}
 For any $k$ there exists a one popular color graph $G_k$ with $\chi(G_k)\geq k$.
\end{theorem}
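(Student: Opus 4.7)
The plan is to mimic the Descartes/Tutte inductive construction of graphs of unbounded chromatic number, endowing each graph with an edge-coloring that witnesses the one popular color property. The base case $G_1$ is a single vertex. For the inductive step, assuming that $G_k$ has been built with $n_k$ vertices, $\chi(G_k)\ge k$, and admits a one popular color edge-coloring $c_k$ on a palette $\Sigma_k$, I would construct $G_{k+1}$ as follows: let $A$ be an independent set of size $N:=k(n_k-1)+1$, and for each $n_k$-subset $S\subseteq A$ take a vertex-disjoint copy $G_k^S$ of $G_k$ (inheriting the coloring $c_k$, but on a palette $\Sigma_k^S$ disjoint from everything chosen so far); then add a perfect matching $M_S$ between $S$ and $V(G_k^S)$ and paint every edge of $M_S$ with a single fresh color $m_S$.

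The lower bound $\chi(G_{k+1})\ge k+1$ would follow from the standard Descartes pigeonhole argument: a hypothetical $k$-coloring of $G_{k+1}$ restricts to a $k$-coloring of $A$, and since $|A|=k(n_k-1)+1$, some color class in $A$ contains an $n_k$-subset $T$ that is monochromatic, say in color $\kappa$; the perfect matching $M_T$ then forces every vertex of $G_k^T$ to avoid $\kappa$, so $G_k^T$ is $(k-1)$-colorable, contradicting $\chi(G_k)\ge k$.

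For the one popular color property, condition (1) is essentially automatic at each vertex: inside each $V(G_k^S)$ the inherited coloring $c_k^S$ uses each of its colors on at most two incident edges, the color $m_S$ appears on exactly one matching edge at each $v\in V(G_k^S)$, and each vertex $a\in A$ is incident only to matching edges, each carrying a distinct color $m_S$ for the different subsets $S\ni a$. For condition (2'), I would analyze a cycle $\gamma$ in $G_{k+1}$ by cases: if $\gamma$ stays inside a single copy $V(G_k^S)$ then induction applies, otherwise $\gamma$ traverses at least one matching edge. Since $A$ is independent and each vertex of $V(G_k^S)$ has exactly one matching edge, $\gamma$ cannot use two consecutive matching edges nor enter and leave some $V(G_k^S)$ through the same vertex; hence every time $\gamma$ passes through a copy $V(G_k^S)$ it does so via two distinct edges of $M_S$, both colored $m_S$, supplying the required repeated color. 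The step that needs the most care is this structural analysis of cycles crossing several copies, which I would formalize by decomposing $\gamma$ into an alternating sequence of vertices of $A$ and nontrivial internal paths through the copies $V(G_k^{S_i})$ it traverses.
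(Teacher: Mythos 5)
Your proposal is correct and follows essentially the same route as the paper: the Descartes/Tutte inductive construction with an independent set of size $k(n_k-1)+1$, disjoint copies of the previous graph attached by matchings each carrying a fresh private color, and the observation that any cycle not contained in a copy must enter and leave some copy through two distinct edges of the same matching. The only (harmless) differences are that you give each copy a disjoint palette where the paper reuses one palette across copies, and that you spell out the pigeonhole argument for $\chi(G_{k+1})\geq k+1$, which the paper delegates to the literature.
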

 \begin{proof}
We proceed by induction on $k\geq 1$. Choose $G_1$ to be the graph with a single vertex and $G_2=C_4$ with edges colored alternatingly with two colors. If $k\geq 3$ then denote by $n$ the number of vertices of $G_{k-1}$. Define $X$ as a set of $(k-1)(n-1)+1$ new vertices without edges. Now, for every subset $Y$ of size $n$ of $X$ take a copy $G'$ of $G_{k-1}$ (where all copies can be considered to be edge-colored with the same set) and add a matching of size $n$ that connects the vertices of $Y$ with the vertices of $G'$. Each of these new matchings will be edge colored with its own private color. The resulting graph is $G_k$. Note that the choice of the matchings in the construction is not unique.

To see that $G_k$ is a one popular color graph, note first that every color class is a matching, hence the coloring satisfies property (1). 
Now, observe that by induction hypothesis all cycles within a single copy $G'$ have one color at least twice. Since $X$ is an independent set any other cycle must enter and leave some copy $G'$, but then it uses two edges of the same matching, hence repeats at least one color. This proves property (2').

The fact that $\chi(G_k)\geq k$ is well-known, see e.g.~\cite{Des54,N13}.
%The fact that $\chi(G_k)\geq k$ is well-known, but we argue it for completeness. Suppose that there is a proper coloring $f$ of $G_k$ with $[k-1]$. Then there exists a subset $Y$ of $X$ of size at least $\lceil\frac{(k-1)(n-1)+1}{k-1}\rceil\geq n$, such that all vertices of $Y$ are colored with same color - say $f(Y)=k-1$. Then, let $G'$ be one copy of $G_{k-1}$ all of whose neighbors lie in $Y$. Then $f(G')\subseteq [k-2]$, but this contradicts $\chi(G_{k-1})\geq k-1$.
\end{proof}

While we have negatively answered the one popular color variant of Question~\ref{conj:babai_old} its no lonely color variant remains open. Let us propose a strengthening of it.
\begin{conjecture}
There is a function $f$, such that if the edges of a graph $G$ can be colored such the subgraph induced by any color has maximum degree $d$, and no color appears exactly once on a cycle of $G$, then $\chi(G)\leq f(d)$.
\end{conjecture}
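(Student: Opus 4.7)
The conjecture is a genuine strengthening of the no lonely color variant of Question~\ref{conj:babai_old}: the instance $d=2$ is already Babai's open problem, so any approach must treat that case. The plan is therefore to attack general $d$ by induction, in the hope that higher-$d$ flexibility yields structural information that eventually feeds back to small $d$.

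First, the plan is to argue by induction on $d$. Given an edge coloring with color classes of max degree at most $d$ satisfying the no lonely color condition, one picks a color $c$ whose class $E_c$ realizes the degree $d$ and tries to split $E_c = E_c^1 \cup E_c^2$ so that each sub-class has max degree at most $d-1$ and the refined coloring still satisfies the cycle condition. A naive Vizing-type split fails because a cycle using $c$ twice may use each sub-class only once. To avoid this, the idea is to build an auxiliary cycle-adjacency graph $H_c$ on vertex set $E_c$, with edges joining pairs of color-$c$ edges that appear together on a common cycle of $G$, and to split $E_c$ via a proper $2$-coloring (or a cycle cover) of $H_c$; the no lonely color property ensures $H_c$ has no isolated vertices, and a degree-based argument on $H_c$ would yield the desired pairing.

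In parallel, a more direct route is to aim for a degeneracy bound. Writing $t(v)$ for the number of distinct colors incident to a vertex $v$, one has $\deg(v)\leq d\cdot t(v)$, so it suffices to bound $t(v)$ at some minimum-degree vertex. For each color $c$ incident to $v$ on a single edge $vu$, the no lonely color condition forces another edge of color $c$ on every cycle through $vu$. Running a short-range BFS from $v$ and double counting these partner edges, whose endpoints are constrained by the degree-$d$ condition, one would try to show that the number of such lonely-at-$v$ colors is bounded by a function of $d$, and then invoke Brooks' theorem.

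The hard part will be the base case. Both routes, specialized to $d=1$, collapse to Babai's open problem: the induction requires a solution at $d=1$, and the degeneracy argument there is exactly the statement that matching-edge-colored graphs with no lonely cycle are uniformly degenerate, which is unknown. The structural constraints currently available at the base case amount to forbidden-subgraph information (Babai's $K_4-e$, Spencer's arbitrary-girth constructions), which is too weak. A realistic intermediate target suggested by this plan is to identify the minimal additional hypothesis at $d=1$ under which the inductive framework closes; the long-term goal would be to remove that hypothesis and thereby also resolve Question~\ref{conj:babai_old}.
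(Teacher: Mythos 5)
This statement is a conjecture that the paper itself leaves open --- the authors explicitly note that it is unresolved even for $d=1$ --- so there is no proof in the paper to compare against, and your proposal does not supply one either. You acknowledge as much: both of your routes bottom out at the $d=1$ (or $d=2$) case, which is precisely the weak variant of Question~\ref{conj:babai_old} that the conjecture is designed to strengthen. A research plan that reduces an open problem to itself is not a proof, and the honest framing at the end of your write-up does not change that. What you have written is a (reasonable) discussion of why the conjecture is hard, not an argument for it.

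Beyond the missing base case, the inductive step as described is also broken. If you split a color class $E_c$ into $E_c^1\cup E_c^2$, the cycle condition requires that every cycle meeting $E_c^i$ does so in at least two edges; but a proper $2$-coloring of your auxiliary graph $H_c$ does the opposite, since it separates pairs of $c$-edges that co-occur on a cycle, so a cycle carrying exactly two $c$-edges would see each new color exactly once. The only split that manifestly preserves the cycle condition is along connected components of $H_c$ (the $c$-edges of any fixed cycle form a clique in $H_c$, hence lie in one component), but that split gives no control on the maximum degree of the sub-classes, which was the entire point of the induction. The two requirements pull in opposite directions, and nothing in the proposal resolves that tension. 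The degeneracy route has the same status: the inequality $\deg(v)\le d\cdot t(v)$ is fine, but bounding $t(v)$ at a minimum-degree vertex for $d=1$ is already (a strengthening of) the open problem, and the BFS double-counting step is not carried out. In short: there is a genuine gap --- the statement remains a conjecture, and neither route in the proposal closes it or could close it without first resolving the very question it generalizes.
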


Note that this conjecture for $d=2$ implies a positive answer to the weak variant of Question~\ref{conj:babai_old}. However, it is open even for the case $d=1$. On the other hand, it also remains open if graphs with a one popular color coloring exist that satisfy Conjecture~\ref{conj:babai_new}. Finally, note that the questions about the boundedness of the chromatic number of minimal or semiminimal Cayley graphs of general finite groups remain open.

\section{Acknowledgements}
We thank L\'aszl\'o Babai for several comments on an earlier version of this paper. 

The second author is partially supported by the Spanish Ministerio de Econom\'ia, Industria y Competitividad through grant RYC-2017-22701 and the Severo Ochoa and Mar\'ia de Maeztu Program for Centers and Units of Excellence in R\&D (CEX2020-001084-M).

Both authors are partially supported by the grant AcoGe (PID2022-137283NB-C22) funded by MCIN/AEI/10.13039/501100011033 and by ERDF/EU.

\bibliography{fraglit}
\bibliographystyle{my-siam}
\end{document}